\def\NZQ{\mathbb}               
\def\NN{{\NZQ N}}
\def\QQ{{\NZQ Q}}
\def\F2{{\NZQ F}_2}
\def\opn#1#2{\def#1{\operatorname{#2}}} 
\opn\chara{char} \opn\length{\ell} \opn\pd{pd} \opn\rk{rk}
\opn\projdim{proj\,dim} \opn\injdim{inj\,dim} \opn\rank{rank}
\opn\depth{depth} \opn\codepth{codepth} \opn\grade{grade}
\opn\height{height} \opn\embdim{emb\,dim} \opn\codim{codim}
\opn\Tr{Tr} \opn\bigrank{big\,rank}
\opn\superheight{superheight}\opn\lcm{lcm}
\opn\trdeg{tr\,deg}%
\opn\reg{reg} \opn\lreg{lreg} \opn\skel{skel}
\opn\Gr{Gr}
\opn\ann{ann}
\opn\sign{sign}
\opn\del{del}
\opn\lex{lex}
\opn\div{div} \opn\Div{Div} \opn\cl{cl} \opn\Cl{Cl}
\opn\Spec{Spec} \opn\Supp{Supp} \opn\supp{supp} \opn\Sing{Sing}
\opn\Ass{Ass}\opn\fdepth{fdepth}
\opn\Ann{Ann} \opn\Rad{Rad} \opn\Soc{Soc}
\opn\Sym{Sym} \opn\Ker{Ker} \opn\Coker{Coker} \opn\Im{Im}
\opn\Hom{Hom} \opn\Tor{Tor} \opn\Ext{Ext} \opn\End{End}
\opn\Aut{Aut} \opn\id{id} \opn\ini{in} \opn\tr{tr}
\opn\nat{nat}\opn\it{it}
\opn\pff{proof}
\opn\Pf{proof} \opn\GL{GL} \opn\SL{SL} \opn\mod{mod} \opn\ord{ord}
\opn\aff{aff} \opn\con{conv} \opn\relint{relint} 
\opn\lk{lk} \opn\cn{cn} \opn\core{core} \opn\vol{vol}
\opn\link{link} \opn\star{star} \opn\skel{skel} \opn\indeg{indeg}
\opn\Ass{Ass} \opn\Min{Min} \opn\sdepth{sdepth} \opn\depth{depth}
\opn\gr{gr}
\def\pot#1#2{#1[\kern-0.28ex[#2]\kern-0.28ex]}
\opn\dirlim{\underrightarrow{\lim}}
\opn\inivlim{\underleftarrow{\lim}}
\let\to=\rightarrow
\def\Implies{\ifmmode\Longrightarrow \else
     \unskip${}\Longrightarrow{}$\ignorespaces\fi}
\def\implies{\ifmmode\Rightarrow \else
     \unskip${}\Rightarrow{}$\ignorespaces\fi}
\def\iff{\ifmmode\Longleftrightarrow \else
     \unskip${}\Longleftrightarrow{}$\ignorespaces\fi}
\let\ol=\overline
\theoremstyle{plain}
\newtheorem{Theorem}{Theorem}[section]
 \newtheorem{Lemma}[Theorem]{Lemma}
 \newtheorem{Proposition}[Theorem]{Proposition}
 \theoremstyle{definition}
 \newtheorem{Definition}[Theorem]{Definition}
  \newtheorem{Algorithm}[Theorem]{Algorithm}
 \newtheorem{Remark}[Theorem]{Remark}
 \newtheorem{Example}[Theorem]{Example}
\let\epsilon\varepsilon
\let\kappa=\varkappa
\opn\dis{dis}
\def\pnt{{\raise0.5mm\hbox{\large\bf.}}}
\opn\Lex{Lex}
\newcommand{\PP}{\mathcal{P}}
\newcommand{\FF}{\mathbb{F}}
\newcommand{\MF}{\mathcal{F}}
\newcommand{\MV}{\mathcal{V}}
\newcommand{\KK}{\mathbb{K}}
\renewcommand{\H}{\mathrm{H}}
\newcommand{\HP}{\mathrm{HP}}
\newcommand{\HS}{\mathrm{HS}}
\renewcommand{\reg}{\mathrm{reg}}
\newcommand{\MM}{\mathcal{M}}
\newcommand{\MN}{\mathcal{N}}
\newcommand{\MO}{\mathcal{O}}
\renewcommand{\QQ}{\mathcal{Q}}
\title{OliVier: an Oil and Vinegar based cryptosystem}
\author{Antonio Corbo Esposito, Rosa Fera, Francesco Romeo}
\date{}
\begin{document}
\begin{abstract}
    In this paper, we present OliVier a new Public Key Exchange cryptosystem that is based on a multivariate quadratic polynomial system: Oil $\&$ Vinegar polynomials together with fully quadratic ones. We describe its designing process, usage, complexity.
\end{abstract}

\maketitle

\section{Introduction}
In the last years there is a lot of interest in quantum computers' development since they could solve many mathematical problems that are difficult for conventional computers. In particular quantum computers are able to break the most widely used public-key cryptosystems; for example, the ECDSA protocol used for authentication in cryptocurrencies schemes is very well-known to be vulnerable to Shor's Algorithm (see \cite{Sh,HC}): such algorithm can be efficiently implemented on a quantum computer. 
These issues led the National Institute of Standards and Technology (NIST) to open a call in 2016 for ``Post-Quantum'' encryption algorithms. The goal to reach is to create new cryptographic protocols that are secure against both quantum and classical computers. 

All mathematical problems that are known to be NP-complete are natural candidates for building algorithms that satisfy such requirements.
One of these problems is the resolution of Systems of Multivariate Polynomials over Finite Fields, that is NP-complete even for systems of quadratic equations on $\FF_2$.
This feature inspired many researchers to build cryptosystems based on Multivariate Quadratic systems (MQ for brevity), i.e. algebraic systems of multivariate polynomials of degree~2.

The first MQ scheme has been introduced by Matsumoto and Imai in 1988 (see \cite{MI}). Although their $C^{\ast}$ scheme was broken by Jacques Patarin in 1995 (see \cite{Pa1}), such a scheme was generalised by Patarin himself: in 1996, in \cite{Pa2} he presented the \textit{Hidden Field Equations} (HFE) scheme that can be used in the context of asymmetric cryptography, for digital signatures or encryption of very short messages. The crucial point is the existence of a central function $F$, called \emph{trapdoor}, that makes it easy to invert. 

However, there have been many attacks that broke HFE scheme: the first attack was presented by Kipnis and Shamir in 1999 (see \cite{KS2}) by an innovative technique called relinearization; other attacks are the XL algorithm (see \cite{CKPS}), based on the linearization of the system or the one due to Faugere and Joux (see \cite{FJ}), which is based on the computation of Gr\"obner basis of the multivariate quadratic system.\\
However in 1997, Patarin proposed a new scheme, only suitable for digital signatures, called \textit{Oil and Vinegar scheme} (OV scheme) \cite{Pa3}. The main feature of these multivariate quadratic systems relies on separating the variables into two distinct subsets: the subset of variables called ``oil'' and the one of the variables called ``vinegar''. Each polynomial (of degree 2 with coefficients in a finite field) is chosen such that, after an evaluation in the vinegar variables, it is linear only in the oil variables. Such a scheme is based on an under-determined MQ system and the goal is to find one of the many preimages of the original message. 

It must be highlighted that in the case in which the number of oil variables is equal to the number of the vinegar variables, the scheme was already broken by Kipnis and Shamir (see \cite{KS1}), therefore it was modified to the so-called ``Unbalanced Oil and Vinegar scheme'' (UOV), where the number of vinegar variables is greater than the number of oil ones (see \cite{KPG}). Among the various algorithms based on the UOV scheme, there is one that has a lot of success and has been selected by the third round of NIST competition, that is Rainbow. It is a digital signature scheme designed by Ding and Schmidt in 2004 (see \cite{DS}).

In our opinion, the initial failure of HFE and the challenges of inverting an OV system with numerous vinegar variables led many researchers to disregard the potential use of the OV scheme for overdetermined systems.

On the other hand, the early success of UOV and Rainbow multiplied interest and research into building attacks on this type of systems; these studies culminated very recently into very important breakthroughs: Bardet et al. introduced a very efficient attack, Support Minors Modeling, to solve the so-called MinRank problem, i. e. how, given a finite set of matrices, to find their existing linear combinations having a prescribed small rank (see \cite{Ba1}). Such improvement, gave an important contribution to more targeted attacks to UOV and Rainbow, whose security was significantly weakened by Beullens (see \cite{Be1}, \cite{Be2}). Ultimately, this led to the defeat of Rainbow in the final round of NIST competition for signature protocols.

The purpose of the present paper is to show that it is possible to build an MQ-system with trapdoor (therefore suitable not only for signature protocols but also for encryption ones), based on a suitable mixing of OV-type equations and fully quadratic ones, such that, up to the current knowledge, there is no known attack running in a time bounded by polynomial function of the decryption time. 
In \cite{CFR}, we have already analyzed Hilbert series of OV homogeneous systems and those of mixed systems ones. Hilbert series is a useful tool to estimate the degree of regularity of the systems. In this paper, we go a step further and improve the above results, finding some monomial subspaces of the Oil $\&$ Vinegar monomials that, when full rank, provide valuable information about the first fall degree of the system on $\FF_2[x_1,\ldots, x_n]$ modulo the field equations. Moreover, we introduce OliVier, an Oil $\&$ Vinegar based cryptosystem, that has a standard structure, that is a central map $F: \FF_2^n \to \FF_2^{n}$, consisting in OV quadratic equations, and two invertible linear transformations: $S: \FF_2^n \to \FF_2^n$, namely a change of coordinates, and, given a system of fully quadratic equations $\QQ$, a map $\Lambda$ mixing $F$ with $\QQ$. The public key will be
\[
\PP =\begin{cases}
    F\circ S +\Lambda \QQ\\
    \QQ
\end{cases}
\]
We describe a key exchange protocol by using OliVier. Furthermore, we motivate some of the designing choices: the possibility of recovering the oil space led us to a particular design given by the introduction of the map $\Lambda$ (see \ref{sec:lambda}) that allows us to derive the fully quadratic equations from a seed and a pseudo-random function. Besides the analysis of some known attacks, such as Support Minors Modeling, XL/Block Wiedemann, and F4/F5, we also studied some other probabilistic attacks linked to some parameters' choices. Moreover, looking for a security level competing with those proposed by NIST, we propose some specific numerical parameters in view of practical applications. To achieve such security level, we studied the degree of regularity and first fall degree of the system for the proposed parameters. Finally, we present some technique to speed up the communication process, by using particular operation properties on $\FF_2$, in sight of a practical implementation and a complexity analysis.



\section{Preliminaries}
In this section, we recap fundamental notions and definitions that are useful in the sequel. 

 We recall that for a finite field $\FF_q$, we consider the quotient ring $S = \FF_q[x_1,...,x_n]/(x_1^q - x_1,...,x_n^q - x_n)$, where $x_i^q - x_i$ are called \textit{field equations}, for $i = 1,...,n$.
In order to work on a graded ring, we consider the ring $R = S^h = \FF_q[x_1,...,x_n]/(x_1^q,...,x_n^q) $ because in this case the ideal $(x_1^q,...,x_n^q)$ is homogeneous (as done in \cite{DG}, \cite{CG}, \cite{DS}). From now on, we refer to $S^h$ as $S_q$.
We recall the definition of the first fall degree
\begin{Definition}
Given a homogeneous ideal $\MF \subset S_q$, the \emph{first fall degree} $d_{\mathrm{fall}}$ is defined as 
\[
d_{\mathrm{fall}} = \min\{d \in \NN : Syz(\MF)_d/T_d(\MF) \neq 0\},
\]
where $Syz(\MF)_d$ is the vector space of homogeneous syzygies of degree $d$ and $T(\MF)_d = T(\MF) \cap Syz(\MF)_d$, where $T(\MF)$ is the submodule of $Syz(\MF)$ of the trivial syzygies of $\MF$.
\end{Definition}
In this paper, we consider MQ-systems on the finite field $\FF_2$.

Our recurring object will be a quadratic polynomial on $\FF_2[x_1,\ldots, x_n]$, say 
\[
p(x_{1},\ldots, x_{n})=\sum\limits_{1\leq  i \leq j \leq n} \alpha_{ij} x_{i}x_{j} +\sum\limits_{i=1}^n \beta_{i} x_i + \gamma.
\]
Without any particular assumption on the $\alpha_{ij}$, we will call such a polynomial a \emph{fully quadratic} polynomial.
    Moreover, let $v, n \in \NN$ be positive integers with $v< n$. A quadratic polynomial of the form
\begin{equation}\label{eq:OV}
    f(x_{1},\ldots, x_{n})=\sum\limits_{i=1}^n \ \ \sum\limits_{j=1}^v \alpha_{i,j} x_i x_j +\sum\limits_{i=1}^n \beta_{i} x_i + \gamma,
\end{equation}
namely with no monomial $x_{i}x_{j}$ for $i,j \in \{v+1,\ldots,n\}$, is called an \emph{Oil and Vinegar} (OV) polynomial. The variables $x_1, \ldots, x_v$ are called \emph{vinegar} variables, while the variables $x_{v+1},\ldots, x_{n}$ are called \emph{oil} variables.

We denote by $\mathbf{x}$ a vector $(x_1,\ldots, x_n)$ in $\FF_2^n$.
Given a quadratic polynomial $q(\mathbf{x})$, one can consider its \textit{polar form}
\begin{equation}\label{eq:polar}
    q'(\mathbf{x},\mathbf{y})=q(\mathbf{x}+\mathbf{y})+q(\mathbf{x})+q(\mathbf{y}).
\end{equation}

Such a map is known to be symmetric and bilinear (see \cite[Theorem 1]{Be1}).

\subsection{Hilbert series and other algebraic invariants}\label{sec:HS&Alg}


Let us consider a generic system of $m$ quadratic equations in $n$ variables, $f_{1}(x_{1},\ldots, x_{n})$, $\ldots,$ $ f_{m}(x_{1},\ldots, x_{n})$, where for $1 \leq h \leq m$ we have 
\[
f_{h}(x_{1},\ldots, x_{n})=\sum\limits_{1\leq  i \leq j \leq n} \alpha^h_{ij} x_{i}x_{j} +\sum\limits_{i=1}^n \beta^h_{i} x_i + \gamma^h.
\]
For any degree $d \in \NN$, we consider the multiplication of any of the $m$ equations by all the monomials in the variables $x_{1},\ldots, x_{n}$ of degree up to $d-2$. The \emph{Macaulay Matrix} at degree $d$, $M_d$, is the matrix whose columns are labelled by the monomials up to degree $d$ and the rows are labelled as the equations discussed above, such that any entry corresponds to the coefficient of the monomial, on the column, in the corresponding equation, on the row. The least degree $d$ such that, after Gaussian elimination, the rows of $M_d$ form a Gr\"obner basis is called \emph{solving degree}. There are also other notions of degrees that allow to obtain relations on the equations of the system, for example the \emph{degree of regularity}, that is defined in \cite{Ba}. To introduce such an invariant, we need some algebraic setting. 

Given $I\subseteq R=\KK[x_1,\ldots, x_n]$ homogeneous ideal, the \emph{Hilbert function} $\H_{R/I} : \mathbb{N} \rightarrow \mathbb{N}$ is defined by 
\[
\H_{R/I} (d) := \dim_\KK (R/I)_d
\]
where $(R/I)_d$ is the $d$-degree component of the gradation of $R/I$, while the \emph{Hilbert-Poincar\'e series} of $R/I$ is
\[
\HS_{R/I} (t) := \sum_{d \in \NN} \H_{R/I}(d) t^d. 
\]
 By the Hilbert-Serre theorem, the Hilbert-Poincar\'e series of $R/I$ is a rational function. In particular, by reducing this rational function we get
\[
\HS_{R/I}(t) = \frac{h(t)}{(1-t)^d}. 
\]
that is called  \emph{reduced Hilbert series}. 
It is well-known that for large $d$ the Hilbert function $\H_{R/I}(d)$ is a polynomial, called \emph{Hilbert polynomial}, denoted by $\HP_{R/I}(t)$. The least $d$ for which $\H_{R/I}(d)=\HP_{R/I}(d)$, is called \emph{index of regularity}. Observe that, $\H_{R/I}(d)= \dim_\KK R_d - \dim_\KK I_d$. According to this expression, the index of regularity can be seen as the smallest $d$ for which rephrased as $I_d=R_d$.
We define the \emph{degree of regularity} of a system of polynomial equations as the index of regularity of $R/(f_1,\ldots, f_m)$. 


From the characterization of regular sequences (see \cite{Ba}), we can obtain the following on quadratic polynomials.
\begin{Proposition}\label{prop:regSeq}
    Let $f_1,\ldots, f_m$ be a homogeneous regular sequence on $\FF_2$ of quadratic polynomials. The following are equivalent:
    \begin{itemize}
        \item the ideal $(f_1,\ldots, f_m )$ has Krull dimension $n-m$;
        \item the Hilbert series of $R/( f_1,\ldots, f_m )$ is
        \[
         \frac{(1+t)^n}{(1+t^2)^m}
        \]
    \end{itemize}
    
\end{Proposition}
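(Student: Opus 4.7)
The plan is to reduce everything to the short exact sequence
\begin{equation*}
 0 \to K_i(-2) \to R_{i-1}(-2) \xrightarrow{\cdot f_i} R_{i-1} \to R_i \to 0,
\end{equation*}
where $R_i = R/(f_1,\ldots,f_i)$ and $K_i = \ann_{R_{i-1}}(f_i)$, coupled with an identity peculiar to characteristic $2$ and to the Artinian ambient ring $R = S_2$. The base case $R_0 = R$ has Hilbert series $(1+t)^n$ by counting square-free monomials, matching the $m=0$ instance of the proposed formula.

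The decisive observation is that every quadratic $f \in R$ satisfies $f^2 = \sum \alpha_{jk}^2\, x_j^2 x_k^2 = 0$, since $x_j^2 = 0$ in $R$. Consequently $f_i$ itself lies in $K_i$, and hence $(f_i) \subseteq K_i$ as ideals of $R_{i-1}$. I would then argue that the regular-sequence/Krull-dimension hypothesis forces this inclusion to be an equality, so that $K_i = (f_i)$. With this identification in hand, $\HS_{K_i}(t) = \HS_{R_{i-1}}(t) - \HS_{R_i}(t)$, and the Hilbert-series identity extracted from the above SES collapses to the recursion $(1+t^2)\,\HS_{R_i}(t) = \HS_{R_{i-1}}(t)$, which iterates to $\HS_{R/(f_1,\ldots,f_m)}(t) = (1+t)^n/(1+t^2)^m$. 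The converse direction is obtained by running the same computation backwards: the prescribed Hilbert series, read through the SES, enforces the recursion step by step, which compels $K_i = (f_i)$ at each stage and in turn yields the Krull-dimension condition by the standard codimension argument for complete intersections.

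The main obstacle is the identification $K_i = (f_i)$. The inclusion $(f_i) \subseteq K_i$ is immediate from $f_i^2 = 0$, but the reverse inclusion genuinely uses the regularity of the full sequence: one must check that a putative $g \in K_i \setminus (f_i)$ would create a Koszul-type syzygy among $x_1^2,\ldots,x_n^2,f_1,\ldots,f_m$ that is incompatible with the assumed codimension. Once this is settled, the rest is inductive bookkeeping with short exact sequences, and the converse implication reduces to formal manipulation of the resulting rational expressions.
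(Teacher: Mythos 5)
Your computational skeleton is the right one --- and it is worth noting that the paper itself gives no proof of this proposition, only a pointer to \cite{Ba}, so you are effectively supplying the argument that the citation stands in for. The base case $\HS_R(t)=(1+t)^n$ for $R=\FF_2[x_1,\dots,x_n]/(x_1^2,\dots,x_n^2)$, the identity $f^2=0$ for any quadric (Frobenius plus $x_j^2=0$), the four-term exact sequence, and the recursion $(1+t^2)\,\HS_{R_i}(t)=\HS_{R_{i-1}}(t)$ under the assumption $\ann_{R_{i-1}}(f_i)=(f_i)$ are all correct and are exactly Bardet's derivation. For the converse you need one more observation you did not state: since $(f_i)\subseteq \ann_{R_{i-1}}(f_i)$ always holds, the exact sequence gives the coefficientwise inequality $(1+t^2)\,\HS_{R_i}(t)\geq \HS_{R_{i-1}}(t)$, hence $(1+t^2)^m\,\HS_{R_m}(t)\geq (1+t)^n$, and equality of the final series forces equality --- and therefore $\ann_{R_{i-1}}(f_i)=(f_i)$ --- at \emph{every} intermediate stage; this is needed because the hypothesis constrains only the last quotient. (Also, the identity can only be meant up to the first non-positive coefficient of $(1+t)^n/(1+t^2)^m$; the paper leaves this truncation implicit.)

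The genuine gap is the step you yourself flag as the main obstacle: deducing $\ann_{R_{i-1}}(f_i)=(f_i)$ from the Krull-dimension hypothesis by ``the standard codimension argument for complete intersections.'' That argument is not available here. The ring $R$ is Artinian --- $x_1^2,\dots,x_n^2$ already cut the dimension to zero --- so $x_1^2,\dots,x_n^2,f_1,\dots,f_m$ is never a regular sequence in $\FF_2[x_1,\dots,x_n]$ for $m\geq 1$, and classical codimension counts say nothing about annihilators modulo the field equations. Concretely: for $n=4$, $m=1$, $f_1=x_1x_2$ is a regular element of the polynomial ring and the ideal $(x_1x_2)$ has Krull dimension $3=n-m$, yet in $R$ one has $x_1\in\ann_R(x_1x_2)\setminus (x_1x_2)$ and $\HS_{R/(x_1x_2)}(t)=1+4t+5t^2+2t^3+t^4$, which is not the (truncated) series $(1+t)^4/(1+t^2)=1+4t+5t^2+\cdots$. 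So the dimension condition alone does not force $K_i=(f_i)$, and no Koszul-type syzygy argument will rescue this. In Bardet's framework the equality $\ann_{R_{i-1}}(f_i)=(f_i)$ is essentially the \emph{definition} of the sequence being regular over $\FF_2$ (the smallest annihilator compatible with the unavoidable relation $f_i^2=0$); if you read the standing hypothesis of the proposition that way, your recursion proves the Hilbert-series formula outright, but then the asserted equivalence with the Krull-dimension clause is precisely what still lacks a proof, and your sketch does not supply one.
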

One can use the above characterization to determine whether a sequence of polynomial is ``generic'' (see \cite{Ba}).
\begin{Definition}
A sequence of quadratic homogeneous polynomials $f_1,\ldots, f_m$ on $\FF_2$  is called \emph{cryptographic semiregular} if 
\[
\HS_{R/(f_1,\ldots, f_m)}(t)= \frac{(1+t)^n}{(1+t^2)^m}.
\]
\end{Definition}

\subsection{MinRank Problem and Support Minors Modeling}\label{sec:MinRank}
The MinRank problem is the problem of finding a non-trivial linear combination of $K$ matrices $M_1,\ldots, M_{K}$, of size $m\times n$, having rank smaller than or equal to a \emph{target rank} $r\leq \min \{m,n\}$. Namely, in our case, one has to find coefficients $\lambda_i \in \FF_2$ for $i=1,\ldots,K$, not all zero, such that, given $M= \sum\limits_{i=1}^K \lambda_i M_i$, we have $\rank M \leq r$.

In recent papers, Bardet et al. introduced a new technique, called Support Minors Modeling, to solve the problem in an efficient way (see \cite{Ba1}). The idea of this method is that, if $\rank M \leq r$, then $M$ can be factorized as follows:
\[
M= R \cdot C,
\]
where $R$ is a $m \times r$ matrix and $C$ is a $r \times n$ matrix and they can be full rank. $R$ is an unknown matrix whose columns give a basis for the column space of the matrix $M$. The $j$-th column of $C$ represents the coordinates of the $j$-th column of $R$ with respect to such a basis.
Since the $j$-th row $\mathbf{r}_j$ of $M$ is a combination of the rows of $C$ with respect to the $j$-th row of $R$, and the rank of $C$ is at most $r$, then any $(r+1)$-minor of the $(r+1)\times n$ matrix 
\[
C_{j}=\begin{pmatrix}
\mathbf{r}_j \\
C
\end{pmatrix}
\]
vanishes. Each of these minors arises from a subset of cardinality $r+1$ of the columns of $C_j$. Moreover, expanding the determinant by the row $\mathbf{r}_j$, each term is the product of an element of $\mathbf{r}_{j}$ (hence a linear combination of the coefficients $\lambda_i$) times an $r$-minor of $C$. Since $C$ is an $r\times n$ matrix, the $r$-minors of $C$ are in bijection with the cardinality-$r$ subsets of $\{1,\ldots, n\}$, i.e. given $T \subseteq \{1,\ldots n\}$ with $|T|=r$, we denote by $c_T$, the minor of $C$ containing the columns with indices in $T$. We consider the minors $c_T$ as variables of the system. 
The vanishing of all the $(r+1)$-minors of $C_j$ for $j \in \{1,\ldots, m\}$, gives rise to a bilinear system of $m\binom{n}{r+1}$ equations in the  $\binom{n}{r}+K$  variables $c_T$ and coefficients $\lambda_i$, yielding $ K\binom{n}{r}$ quadratic monomials. Therefore, the system arising from a MinRank instance can be clearly reduced to a linear homogeneous system when
    \begin{equation}\label{eq:minrank}
        m\binom{n}{r+1}\geq K\binom{n}{r} -1.
    \end{equation}
    
    It is worth noting that it may be advantageous to take a \emph{puncturing} of the matrices of the linear combination, that is consider only a fixed subset of $n'$ columns of the original matrices $M_i$. The smallest $n'$ satisfying Equation \ref{eq:minrank} is $n'=2r+2$. According to \cite{Ba1}, in such a case the complexity is
    \begin{equation}\label{eq:MRcompl}
        O \left( K(r+1)\Bigg( \binom{2r+2}{r}K \Bigg)^2\right)= O \left( K^3 (r+1) \binom{2r+2}{r}^2 \right)
    \end{equation}

\section{First fall degree of OV and mixed systems}\label{sec:ffdeg}
In this section, we follow the notation of the paper \cite{CFR} and we discuss the first fall degree of OV systems and mixed systems. In particular, one may observe that a degree fall in the OV system is also a degree fall in a related mixed system, hence the first fall degree of the OV system represents an upper bound for the first fall degree of the mixed system. In view of this, we focus on the first fall degree of OV-systems.

We recall that if $\MF$ is a system of homogeneous quadratic OV-equations with $v$ vinegar variables, then the Hilbert series of $R/\MF$ depends on the Hilbert series of the vinegar ideal $\MV=(x_1,\ldots,x_v)$ modulo $\MF$. In particular, the least $d \in \NN$, such that
\[
\dim \MF_d \geq \dim \MV_d
\]
corresponds to the degree of regularity $d_\reg(\MF)$. In particular, we say that the equations in $\MF_d$ saturate the space $\MV_d$. However, when working in finite fields, we may have saturation of smaller spaces in degrees lower than $d_\reg (\MF)$. We focus on the case $\KK=\FF_2$ with the field equations. 
Even though the argument below holds in general, for our purpose, it makes sense to consider a over/determined system $\MF$ and hence from \cite{CFR} we have $d\leq d_\reg (\MF)-1\leq v$ from now on. 

The space of degree-$d$ monomials can be partitioned as follows 
\[
R_d =\sum\limits_{i=0}^d \MM^i_d,
\]
where $\MM^i_{d}$ is the set of degree-$d$ monomials having exactly $i$ oil variables. We also consider 
\[
\MN^i_{d}=\bigcup\limits_{j=0}^i \MM^j_d.
\]
Since the equations are quadratic, we will multiply by the degree $d-2$ monomials of $R$ to obtain the equations at degree $d$. Moreover, we recall that the monomials of the equations in $\MF$ have either $2$ vinegar variables or $1$ oil and $1$ vinegar variables; so, by multiplying an equation by a monomial in $\MM^i_{d-2}$, we obtain monomials in $\MM^i_{d}$ and $\MM^{i+1}_{d}$.  

Similarly, we observe that multiplying the equations in $\MF$ by monomials in $\MN^i_{d-2}$, we obtain equations in $\MN^{i+1}_{d}$. We highlight that the number of linearly independent equations in each space can be computed similarly as in \cite[Section 2.5]{CFR} through the formula 
\[
\sum\limits_{j=0}^{i} (-1)^{j} \eta^{i-j}_{d-2j-2} \binom{e+j}{j},
\]
where $\eta^{k}_{h}=|\MN^{k}_{h}|$ and $e$ is the number of equations in $\MF$. We set 
\[
\sigma^d_i= \eta^{i+1}_{d}-\sum\limits_{j=0}^{i} (-1)^{j} \eta^{i-j}_{d-2j-2} \binom{e+j}{j}.
\]
As explained above, when $\sigma^d_i <0$ for some $i$, then we have a degree fall at degree $d$, and this is evident in the Hilbert series. In fact, let $\mathcal{G}$ be a generic system with the same number of equations of $\MF$, $\HS_{R/\mathcal{G}}(t)=\sum_{d} h_d t^d$ and $\HS_{\mathcal{F}}(t)=\sum_{i} h'_d t^d $. Then, $\sigma^d_i <0$ for some $i$ and $d$, then $h'_d=h_d - \sigma_i$.

\begin{Example}
    We consider a generic system $\mathcal{G}$ of $25$ quadratic equations in $20$ variables and an OV-system $\MF$ of $25$ quadratic equations in $20$ variables with $9$ vinegar variables. By using \texttt{MAGMA}, we compute $d_\reg (\mathcal{G})=d_{\reg}(\MF)=5$ and we compute $\sigma^d_i$ for $i \in \{0,\ldots, d-2\}$ and $d\in \{3,4\}$. We exhibit the results of computation
\begin{table}[H]
    \centering
    \begin{tabular}{|c|c|c|c|c|}
    \hline
         &0&1&2&3  \\
         \hline
         3&255& 475& 640& \\
         4&150 &$-$20 &90 &420\\
         \hline
    \end{tabular}
    \caption{The $\sigma^d_i$ for $i \in \{0,\ldots, d-2\}$ and $d\in \{3,4\}$ }
    \label{tab:sigdi}
\end{table}
    We have a degree fall at degree $4$, and we spot this fall also in the Hilbert series.
In fact computing the two Hilbert Series 
    \[
    \HS_{R/\mathcal{G}}(t)=1 + 20t + 165t^2 + 640t^3 + 420t^4
    \]
    \[
     \HS_{R/\mathcal{F}}(t)=1 + 20t + 165t^2 + 640t^3 + 440t^4+\ldots.
    \]
    the coefficient at degree $4$ is increased by $-\sigma^4_1=20$.
    
\end{Example}

We now consider a mixed system $\PP=\MF+\QQ$, where $\QQ$ is a fully quadratic system with $u$ equations. The first fall degree estimated on $\MF$ through $\sigma_i$ represents an upper bound for the first fall degree of the system $\PP$, namely
\[
d_{\mathrm{fall}}(\PP)\leq d_{\mathrm{fall}}(\MF).
\]
However, also the OV-monomials of the equations of $\QQ$ affect $\sigma_i$ for $i \geq 1$. That is, they appear in spaces whose monomials have at most $i+1$ oil variables. It worth noting that it is possible to have
\[
d_{\mathrm{fall}}(\PP) < d_{\mathrm{fall}}(\MF),
\]
as shown in the next example. 
\begin{Example}
    Set $n=24$.    We consider a generic system $\mathcal{G}$ of $34$ quadratic equations and a mixed system $\PP=\MF+\QQ$ where $\MF$ is an OV-system of $24$ (quadratic) equations with $6$ vinegar variables and $\QQ$ is composed by $10$ fully quadratic equations. By using \texttt{MAGMA}, we compute $d_\reg (\mathcal{G})=d_{\reg}(\PP)=5$ and we compute $\sigma^d_i$ as above for $i \in \{0,\ldots, d-2\}$ and $d\in \{3,4\}$. We exhibit the results of computation
\begin{table}[H]
    \centering
    \begin{tabular}{|c|c|c|c|c|}
    \hline
         &0&1&2&3  \\
         \hline
         3&146& 632& 1448& \\
         4&15& 18& 1242& 4302\\
         \hline
    \end{tabular}
    \caption{The $\sigma^d_i$ for $i \in \{0,\ldots, d-2\}$ and $d\in \{3,4\}$ }
    \label{tab:sigdi}
\end{table}

    We observe that in $\MF$ there is no degree fall.  Moreover, the $10$ fully quadratic equations can be multiplied by two vinegar variables, obtaining $\binom{6}{2} \cdot 10=150$ degree-4 relations, that are all independent. These relations live in a space that has at most $2$ oil, namely the one related to $\sigma^4_1=18$. Set $\widetilde{\sigma}^4_{1}=18-150=-132$.
    
We compute the Hilbert Series of $\mathcal{G}$ and $\PP$:
    \[
    \HS_{R/\mathcal{G}}(t)=1 + 24t + 242t^2 + 1208t^3 + 1837t^4
    \]
    \[
     \HS_{R/\mathcal{\PP}}(t)=1 + 24t + 242t^2 + 1208t^3 + 1969t^4+\ldots.
    \]
    One may observe that the coefficient at degree $4$ is increased by $-\widetilde{\sigma}^4_{1}=132$.
    
\end{Example}

To have a precise computation of the number $\widetilde{\sigma}^d_{i}$, we use the usual technique of counting by inclusion-exclusion principle, to obtain the following expression
\[
\widetilde{\sigma}_i^d=\sigma_{i}^d + \sum_{j=1}^{i} (-1)^j(A_j+B_j),
\]
where for any $j$
\[
A_j=\begin{cases}
    \eta_{i+1-2j}^{d-2j}\binom{u+j-1}{j} &\mbox{if } i+1-2j\geq 0\\
    0 &\mbox{otherwise};
\end{cases}
\]
that is the number of relations that involve only fully quadratic equations, and 
\[
B_j=
    \sum_{k+h=j,\\ i+1-2h-k\geq 0} \eta_{i+1-2h-k}^{d-2j}\binom{u+h-1}{h}\binom{e+k-1}{k},
\] 
that is the number of relations that involves products of OV and fully quadratic equations.

\section{Description}\label{sec:Desc}
Let $v, n \in \NN$ be positive integers with $v<<n$ and $n \geq 1$, and let $o=n-v > v$. We consider a quadratic map
\[
F: \FF_{2}^n \to \FF_2^{n}
\]
given by quadratic polynomials, i.e.  $F(x_1\ldots,x_{n})$ $=$ $(f_1(x_1,\ldots,x_n),\ldots, f_{n}(x_1,\ldots,x_n))$ where, for $i\in \{1,\ldots, n\}$ $f_{i}(x_1,\ldots, x_n)$ is Oil $\&$ Vinegar type  with oil variables $x_{v+1}$, $\ldots$, $x_{n}$, as in Equation \eqref{eq:OV}.

We take an invertible linear application, $S \in \GL_{\FF_2}(n)$ and, for any $i \in \{1,\ldots, n\}$, we set $g_{i}(x_1,\ldots,x_n)=f_{i}(S(x_1,\ldots,x_n))$. Moreover, for a positive integer $u$, we take $u$ fully quadratic polynomials that we label by $q_{1}(x_1,\ldots,x_n)$, $\ldots$,  $q_{u}(x_1,\ldots,x_n)$, and we set $m=n+u$ .
Let $\Lambda=(\lambda_{ij})$ be a $n \times u$ matrix on $\FF_2$.

The public key of OliVier is $\PP(x_1\ldots,x_{n})$ $=$ $(p_1(x_1,\ldots,x_n),\ldots, p_{m}(x_1,\ldots,x_n))$, where, for $i \in \{1,\ldots, n\}$, we have 
\[
p_{i}(x_1,\ldots,x_n)=g_{i}(x_1,\ldots,x_n) + \sum_{j=1}^{u} \lambda_{ij} q_{j}(x_1,\ldots,x_n).
\]
and, for $i \in \{n+1,\ldots, m\}$,
\[
p_{i}(x_1,\ldots,x_n)=q_{i-n}(x_1,\ldots,x_n).
\]

Suppose that the matrix $\Lambda$ is invertible. Otherwise, one can easily remove the fully quadratic in $\approx 2^{(u-1)}$ attempts part from $p_1,\ldots, p_{n}$, finding a OV-type equation. It follows that $u\geq n$. In Section \ref{sec:probatt}, we find a probabilistic attack if $u\approx n$.

\subsection{Communication}\label{sec:comm}
Bob publishes $\PP$ and keeps secret $F$, $S$ and $\Lambda$. Alice takes $t$ vectors, $\mathbf{a}_1,\ldots \mathbf{a}_t \in \FF_2^n$, and, for any $i\in \{1,\ldots, t\}$, computes $\mathbf{b}_i=\PP(\mathbf{a}_i)$, and sends the $\mathbf{b_i}$s to Bob. Bob performs the following \emph{guessing algorithm}.

\begin{Algorithm}\label{alg}
\begin{enumerate}
    \item For any $i\in \{1,\ldots, t\}$, given
    \[
    \mathbf{b}_{i}=\begin{pmatrix}
        b_i^{(1)}\\
        \vdots \\
        b_i^{(n)}\\
        b_i^{(n+1)}\\
        \vdots \\
        b_i^{(m)}\\
    \end{pmatrix},
    \]
    Bob computes
    \[
    \mathbf{w}_{i}=\begin{pmatrix}
        w_i^{(1)}\\
        \vdots \\
        w_i^{(n)}
    \end{pmatrix}
    \]
    where
    \[
    w^{(j)}_{i}=b_{i}^{(j)}+ \sum_{k=n+1}^{m} \lambda_{jk} b_i^{(k)},
    \]
    for $j$ in $\{1,\ldots, n\}$. Let $W$ be the $n \times t$ matrix having columns $\mathbf{w}_{1}, \ldots \mathbf{w}_t$. Let us denote by $W_i$ for $i\in \{1,\ldots, n\}$ the rows of $W$.
    \item Given the central polynomials
    \[
    f_{j}(y_1,\ldots, y_{n})
    \]
    for $j$ in $\{1,\ldots, n\}$, Bob performs a \emph{guessing} of the variables $y_{1},\ldots, y_{v}$ that are the vinegar ones, obtaining linear polynomials in the oil variables $y_{v+1},\ldots, y_{n}$. Namely, given $ \ol{\mathbf{y}} \in \FF^v_{2}$, the polynomial
    \[
    \ol{f}_j(y_{v+1},\ldots, y_{n})=f_j(\ol{\mathbf{y}},y_{v+1},\ldots, y_{n})
    \]
   is linear in the variables $y_{v+1},\ldots, y_{n}$. Let $M_{\ol{\mathbf{y}}}$ be the coefficient matrix of these polynomials. Then, $M_{\ol{\mathbf{y}}}$ is a $n \times (n-v)$ matrix, and let us denote its rows by $M_i$ for $i\in \{1,\ldots, n\}$.
   Bob considers the following matricial equation
   \begin{equation}\label{eq:MyW}\tag{$\ast$}
       M_{\ol{\mathbf{y}}}\begin{pmatrix}
        y_{v+1}\\
        \vdots \\
        y_{n}
    \end{pmatrix}=W
 \end{equation}

   The rank of $M_{\ol{\mathbf{y}}}$ is (at most) $n-v$, that is there are (at most) $n-v$ independent rows, say $M_1,\ldots M_{n-v}$. Hence, there exist coefficients $d_{ij}$ for $i \in \{n-v+1, \ldots, n\}$ and $j \in \{1,\ldots, n-v\}$ such that  
   \[
   M_{i}= \sum_{j=1}^{n-v} d_{ij}M_j.
   \]
   If Equation \eqref{eq:MyW} is compatible, then also the rows of $W$ satisfy the above relations, say 
   \[
   W_{i}= \sum_{j=1}^{n-v} d_{ij}W_j.
   \]
   With this check, we discard the spurious solutions.
   Therefore, given $\widetilde{W}$ as the matrix whose rows are 
   \[
   \widetilde{W}_i=\begin{cases}
      W_i &\mbox{if } i \in \{1,\ldots, n-v\}; \\
     \sum\limits_{j=1}^{n-v} d_{ij}W_j &\mbox{if } i \in \{n-v+1,\ldots, n\}
   \end{cases},
   \]

   then, the guessing $\ol{\mathbf{y}}$ is a good candidate for finding a preimage, if there exists a  $k \in \{1,\ldots, t\}$ for which the $k$-th column 
   \[
    \widetilde{\mathbf{w}}_{k}=\begin{pmatrix}
        \widetilde{w}_k^{(1)}\\
        \vdots \\
        \widetilde{w}_k^{(n)}
    \end{pmatrix}
    \] 
   of $\widetilde{W}$ is such that $\widetilde{w}_k^{(j)}=0$ for any $j \in \{n-v+1,\ldots, n\}$. If this is not the case, Bob returns to Step (2) and chooses another $\ol{\mathbf{y}}$.
    \item Bob solves the system:
    \[
    M_{\ol{\mathbf{y}}}\begin{pmatrix}
        y_{v+1}\\
        \vdots \\
        y_{n}
    \end{pmatrix}=\textbf{w}_k,
    \]
    finding a solution $\widetilde{\mathbf{y}} \in \FF_2^{n-v}$. Given $\mathbf{y}=(\ol{\mathbf{y}} \ | \ \widetilde{\mathbf{y}}) \in \FF_2^{n}$ and $\mathbf{x}=S^{-1}\mathbf{y}$, Bob verifies that $\PP(\mathbf{x})=\mathbf{b}_k$ (this check can be skipped due to Section \ref{sec:opt}.(2-3)) and outputs $\mathbf{x}$, otherwise returns to Step (2).
\item Bob sends $H(\mathbf{x})$ back to Alice  for some hash function $H$.
\item If Alice finds $H(\mathbf{x})$ among the hashes of $\mathbf{a}_1,\ldots, \mathbf{a}_t$ then they agree on some $\mathbf{a}_k=\mathbf{x}$.
\end{enumerate}    
\end{Algorithm}

Roughly speaking, Bob performs a guessing on the vinegar variables, solves a linear system recovering some candidates of oil variables associated to the guessed vinegar ones. By running the algorithm on all the possible vectors of $\FF_2^v$, the algorithm terminates.

\section{Designing  motivations}
In this section, we present some observations that arose during the designing of the system and that influenced the choice of parameters. In particular, we will explain how to attack the system by recovering the oil space in two cases: by dropping the map $S$, and if the number of fully quadratic equations $u$ is near to $n$.
 
\subsection{Polar form of Oil and Vinegar equation}\label{sec:Polar}
In this subsection, we analyze the properties of the polar form of the central map and after the change of coordinates.
Let us consider the quadratic map $\QQ=F\circ S=(g_1,\ldots, g_{n})$, with $F=(f_1,\ldots, f_{n})$. From Equation \eqref{eq:OV}, for $i\in \{1, \ldots, n \}$, the polar form $f'_{i}(\mathbf{x},\mathbf{y})$ is a homogeneous degree-2 polynomial with monomials $x_iy_j$ with $\{i,j\} \not\subseteq \{v+1,\ldots, n\}$. From this property, we get that $f'_i(\mathbf{x},\mathbf{y})$ vanishes if both $\mathbf{x}$ and $\mathbf{y}$ lie on the space:
\[
\MO'=\{(w_{1},\ldots, w_{n}) \in \FF_{2}^n \ | \ w_{k}=0 \mbox{ for } k \in \{1,\ldots,v\}\}
\]
and, by setting $\MO=S^{-1} \MO'$, we get that $g'_i(\mathbf{x},\mathbf{y})=0$ for any $\mathbf{x},\mathbf{y} \in \MO$. \\

From a matricial point of view, one can write 
\[
f_i(\mathbf{x})=\mathbf{x}^t N_i \mathbf{x},\ \  g_i(\mathbf{x})=\mathbf{x}^t M_i \mathbf{x},
\]
and 
\[
f'_i(\mathbf{x},\mathbf{y})=\mathbf{x}^t N'_i \mathbf{y}, \ \ g'_i(\mathbf{x},\mathbf{y})=\mathbf{x}^t M'_i \mathbf{y},
\]
where $M'_i=M_i^t+ M_i$ and $N_i'=N_i^t+ N_i$.
\begin{Remark}\label{rmk:Ni'}
We observe that, for any $i\in \{1,\ldots, n\}$, the matrix $N_i'=(\nu_{jk})$ is such that  $\nu_{jk}=0$ for $\{j,k\} \subseteq \{v+1,\ldots, n \}$, hence
    \[
    \begin{pmatrix}
        \nu_{11} &\nu_{12} &\ldots &\nu_{1v} &\nu_{1v+1} &\ldots &\nu_{1n}\\
        \vdots & \vdots &\ldots & \vdots & \vdots & \vdots & \vdots\\
        \nu_{v1} &\nu_{v2} &\ldots &\nu_{vv} &\nu_{vv+1} &\ldots &\nu_{vn}\\
        \nu_{v+11} &\nu_{v+12} &\ldots &\nu_{v+1v} &0 &\ldots &0\\
        \vdots & \vdots &\ldots & \vdots & \vdots & \ddots & \vdots\\
        \nu_{n1} &\nu_{n2} &\ldots &\nu_{nv} &0 &\ldots &0\\
    \end{pmatrix}
    \]
\end{Remark}
\begin{Lemma}\label{lem:rkNi}
The rank of the matrix $N_i'$ is less than or equal to $2v$.
\end{Lemma}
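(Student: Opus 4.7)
The plan is to exploit the block structure of $N_i'$ described in Remark \ref{rmk:Ni'} and bound the rank by counting how many linearly independent rows (or columns) the matrix can have.

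Write $N_i'$ in the block form
\[
N_i'=\begin{pmatrix} A & B \\ B^{t} & 0 \end{pmatrix},
\]
where $A$ is the $v\times v$ upper-left block (corresponding to indices $\{1,\ldots,v\}\times\{1,\ldots,v\}$), $B$ is the $v\times (n-v)$ block, and the lower-right $(n-v)\times(n-v)$ block is zero by the OV shape of $f_i$. Symmetry of $N_i'$ (which holds since $N_i'=N_i+N_i^{t}$) gives that the lower-left block is $B^{t}$.

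From here I would argue in two steps. First, the top $v$ rows of $N_i'$ span a subspace of $\FF_2^n$ of dimension at most $v$, simply because there are only $v$ of them. Second, each of the bottom $n-v$ rows has the form $(\mathbf{c},\mathbf{0})$ with $\mathbf{c}\in\FF_2^v$ and $\mathbf{0}\in\FF_2^{n-v}$, so the row space of the bottom block lies inside a subspace isomorphic to $\FF_2^v$ and hence has dimension at most $v$. Adding these two contributions, the row space of $N_i'$ has dimension at most $2v$, which is the claim.

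The argument is essentially one of block rank bookkeeping and I do not anticipate any real obstacle; the only thing to make explicit is that on $\FF_2$ the polar-form matrix is symmetric so that the block decomposition above is valid. If one prefers, the same bound can be read off from $\rank N_i'\leq \rank(A\,|\,B)+\rank(B^{t}\,|\,0)\leq v+\rank B\leq 2v$.
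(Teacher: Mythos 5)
Your proof is correct, and it is cleaner than the one in the paper. Both arguments rest on the same structural fact from Remark \ref{rmk:Ni'}, namely that the lower-right $(n-v)\times(n-v)$ block of $N_i'$ vanishes; but where you bound the rank directly by row-space bookkeeping (the top $v$ rows contribute at most $v$, and the bottom $n-v$ rows all lie in the $v$-dimensional subspace of vectors supported on the first $v$ coordinates), the paper argues via minors, and its wording is actually backwards: exhibiting a \emph{nonzero} $v\times v$ minor establishes a lower bound on the rank, not the upper bound $\rank N_i'\leq 2v$ that the lemma asserts. The minor-based route can be repaired (any $(2v+1)\times(2v+1)$ submatrix must contain a $(v+1)\times(v+1)$ zero block, forcing it to be singular), but your version needs no such repair. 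One minor remark: you do not actually need the symmetry of $N_i'$ or the identification of the lower-left block as $B^{t}$; the argument only uses that the lower-right block is zero, so the appeal to symmetry can be dropped without loss.
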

\begin{proof}
Since $v<<n$, then one can find a minor $v\times v$ on the first $v$ rows and in the last $n-v$ columns of $N'_i$ having non-zero determinant, as well as a minor on the first $v$ columns and in the last $n-v$ rows, by symmetry. Hence the rank is less than or equal to $2v$.
\end{proof}

\begin{Lemma}
    For any $i \in \{1, \ldots, n \}$, we have 
    \[
    M'_i= S^t N_i' S
    \]
\end{Lemma}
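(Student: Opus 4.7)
The plan is to chain together the definition of $g_i$, the linearity of $S$, and the bilinearity of the polar form, then invoke uniqueness of the matrix representing a bilinear form.

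First, I would unfold the polar form using $g_i = f_i \circ S$. Since $S$ is linear, we have $S(\mathbf{x}+\mathbf{y}) = S\mathbf{x} + S\mathbf{y}$, and therefore
\[
g'_i(\mathbf{x},\mathbf{y}) = g_i(\mathbf{x}+\mathbf{y}) + g_i(\mathbf{x}) + g_i(\mathbf{y}) = f_i(S\mathbf{x}+S\mathbf{y}) + f_i(S\mathbf{x}) + f_i(S\mathbf{y}) = f'_i(S\mathbf{x}, S\mathbf{y}).
\]
This reduces the identity to one entirely about $f_i$.

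Next, I would plug in the matrix expression $f'_i(\mathbf{u},\mathbf{v}) = \mathbf{u}^t N'_i \mathbf{v}$ with $\mathbf{u}=S\mathbf{x}$ and $\mathbf{v}=S\mathbf{y}$, obtaining
\[
g'_i(\mathbf{x},\mathbf{y}) = (S\mathbf{x})^t N'_i (S\mathbf{y}) = \mathbf{x}^t\bigl(S^t N'_i S\bigr)\mathbf{y}.
\]
Comparing with $g'_i(\mathbf{x},\mathbf{y}) = \mathbf{x}^t M'_i \mathbf{y}$, which holds for all $\mathbf{x},\mathbf{y}\in \FF_2^n$, and using the fact that a bilinear form determines its Gram matrix uniquely (evaluate on pairs of standard basis vectors: $(M'_i)_{jk} = g'_i(e_j, e_k) = (S^t N'_i S)_{jk}$), the conclusion $M'_i = S^t N'_i S$ follows.

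The main subtlety, and essentially the only point worth checking, is the identity $g'_i(\mathbf{x},\mathbf{y}) = f'_i(S\mathbf{x}, S\mathbf{y})$, which relies on $S$ being $\FF_2$-linear so that it commutes with the sum inside $f_i$. Everything else is a one-line manipulation; there is no issue with characteristic $2$ because we are working with the polar form directly rather than with the often non-symmetric Gram matrices $M_i$, $N_i$.
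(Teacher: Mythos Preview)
Your proof is correct, and it takes a genuinely different route from the paper's. You work directly at the level of the polar bilinear forms: you show $g'_i(\mathbf{x},\mathbf{y}) = f'_i(S\mathbf{x},S\mathbf{y})$ using linearity of $S$, rewrite this as $\mathbf{x}^t(S^t N'_i S)\mathbf{y}$, and then invoke uniqueness of the Gram matrix of a bilinear form. The paper instead argues at the level of the (upper-triangular, not necessarily symmetric) matrices $M_i$ and $N_i$ of the quadratic forms themselves: since $g_i = f_i\circ S$, the matrices $S^t N_i S$ and $M_i$ represent the same quadratic polynomial, so their difference $A = S^t N_i S - M_i$ represents the zero polynomial and must therefore be symmetric; then $S^t N'_i S - M'_i = A^t + A = 0$ in characteristic $2$.

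Your approach is more direct and sidesteps the characteristic-$2$ subtlety that two different matrices can represent the same quadratic form (precisely the reason the paper needs the ``$A$ is symmetric'' step). The paper's argument, on the other hand, makes explicit how the ambiguity in the choice of $M_i$, $N_i$ disappears once one passes to the symmetrized matrices $M'_i$, $N'_i$. Both are short; yours requires slightly less unpacking.
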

\begin{proof}
    From $F ( S (\mathbf{x} ))= \PP(\mathbf{x})$, we get that, for any $i \in \{1, \ldots , m\}$, the matrices $S^t N_i S$ and  $M_i$ yield the same quadratic polynomial. Therefore, $A= S^t N_i S - M_i$ is the matrix of the 0-polynomial and hence is a symmetric matrix. Furthermore, $S^t N'_i S - M'_i=A^t +A= 0$.

\end{proof}

\begin{Lemma}\label{lem:kerNi}
Let $i \in \{1, \ldots, n\}$, be such that $\rank{N'_i}=2v$. Then, the following hold:
\begin{enumerate}
    \item $\ker(N'_i)\subset \MO'$. 
    \item $\ker(M_i')= S^{-1} \ker(N_i')$ and hence $\dim (\ker(M'_i))=n-2v$. 
\end{enumerate}
\end{Lemma}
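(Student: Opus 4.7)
The plan is to exploit the block structure of $N'_i$ described in Remark \ref{rmk:Ni'}. Writing
\[
N'_i=\begin{pmatrix} A & B \\ B^t & 0 \end{pmatrix},
\]
where $A$ is $v\times v$ and $B$ is $v\times (n-v)$, the bottom $n-v$ rows of $N'_i$ form the matrix $(B^t\mid 0)$, whose rank equals $\rank B^t\leq v$. Similarly the right-hand $n-v$ columns $\binom{B}{0}$ contribute rank at most $v$. Hence $\rank N'_i\leq 2v$ with equality only if $\rank B^t=\rank B=v$, i.e.\ the columns of $B^t$ are linearly independent and $\ker B^t=\{0\}$.

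For (1), take $w=(w_V,w_O)\in\ker N'_i$ with $w_V\in\FF_2^v$ and $w_O\in\FF_2^{n-v}$. The bottom block of the equation $N'_i w=0$ reads $B^t w_V=0$, and by the previous paragraph $\ker B^t=\{0\}$, so $w_V=0$. This exactly says $w\in\MO'$, proving the inclusion. (Note we do not claim equality here, only containment, since the oil part $w_O$ must still satisfy $Bw_O=0$.)

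For (2), I use the identity $M'_i=S^t N'_i S$ established in the previous lemma. Since $S$, and hence $S^t$, is invertible in $\GL_{\FF_2}(n)$, the equation $M'_i w=0$ is equivalent to $N'_i(Sw)=0$, i.e.\ $Sw\in\ker N'_i$, which is $w\in S^{-1}\ker N'_i$. This gives the claimed equality of kernels. Taking dimensions and using that $S^{-1}$ is a linear isomorphism,
\[
\dim\ker M'_i=\dim\ker N'_i=n-\rank N'_i=n-2v.
\]

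There is no real obstacle here; the only subtle point is being careful that "rank $=2v$" is stronger than what Lemma \ref{lem:rkNi} guarantees, and that it is precisely this hypothesis which forces $\rank B^t=v$ and therefore $\ker B^t=\{0\}$. Everything else is linear algebra over $\FF_2$ combined with the invertibility of $S$.
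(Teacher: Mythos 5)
Your proof is correct. For part (1) you and the paper rely on the same key observation — that $\rank N'_i=2v$ forces the off-diagonal $v\times(n-v)$ block of $N'_i$ to have full rank $v$ — but you deploy it differently. The paper works with the \emph{top} rows: it restricts to $\ker(N'_i)\cap\MO'$, notes that this intersection is cut out by the system $Bw_O=0$ of rank $v$ in $n-v$ unknowns, hence has dimension $n-2v$, and concludes the containment because this already equals $\dim\ker(N'_i)=n-2v$ by rank--nullity. You instead work with the \emph{bottom} rows $(B^t\mid 0)$: full rank of $B^t$ means it is injective on the vinegar coordinates, so the bottom block of $N'_iw=0$ forces $w_V=0$ directly. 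Your route is slightly more direct in that it shows each kernel vector individually lies in $\MO'$ rather than inferring containment from an equality of dimensions; both are sound. For part (2) the paper only writes ``follows from (1) and the invertibility of $S$,'' whereas you supply the actual mechanism via the conjugation identity $M'_i=S^tN'_iS$ from the preceding lemma — which is indeed what is needed, since (1) alone does not give the kernel relation.
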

\begin{proof}
   
    (1).  Let $N'_i= (\nu_{jk})$. We take $\mathbf{w}\in \ker(N'_i)\cap \MO'$, that is $\mathbf{w}=(0,\ldots, 0, w_{v+1}\ldots, w_{n})$ and $N'_i \mathbf{w}= \mathbf{0}$. 
    From the latter and Remark \ref{rmk:Ni'}, we get that
    \[
    \begin{cases}
        \sum\limits_{k=v+1}^{n} \nu_{1k} w_{k}& = 0 \\
        &\vdots\\ 
        \sum\limits_{k=v+1}^{n} \nu_{vk} w_{k} &= 0
    \end{cases}
    \]
    is a system of $v$ equations in the $n-v$ indeterminates $w_{v+1}\ldots, w_{n}$. Since $\rank{N'_i}=2v$, then the matrix associated to the above system has rank $v$, that is the solution space has dimension $n-2v$. By hypothesis, $\rank{N'_i}=2v $, so we have that $ \dim \ker(N_i') = n-2v$, and it follows that $\ker(N'_i)\subset \MO'$.\\
    (2). Follows from (1) and the fact that $S$ is an invertible matrix.
\end{proof}

We observe that since $v << n-v$, the condition $\rank N_i'< 2v$ happens with negligible probability.
Therefore, from Lemma \ref{lem:kerNi}, it follows that an attacker can recover an $E=\bigcup\limits_{i=1}^{n} \ker(M'_i) \subset S^{-1}\MO'$ with $|E|\approx n2^{(n-2v)}$ that is \emph{fatal} information leakage. Moreover, considering $2^v$ combinations of the $M'_i$ we may obtain a subset of cardinality $2^v \cdot 2^{(n-2v)}$, and hence recover the full oil space $\MO$.\\ 

Note that the map $\QQ$ discussed in this section has the form of the one described in the paper \cite{Be1}, with the difference that in their case $n-v < v$ and hence $n-2v < 0$. In this section, we analysed that the MQ-system obtained by combining the central map with only a change of coordinates, i.e. the right composition with $S$, gives rise to matrices of small rank whose kernels have a large intersection with the oil space. To avoid this kind of leakage of information, in our description we have added the fully quadratic equations together with the map $\Lambda$ to combine $OV$-type equations with fully quadratic ones.

\subsection{The role of $\Lambda$}\label{sec:lambda}

In this subsection, we show that our public key $\PP$ obtained by multiplying $\Lambda ' \in \FF_2^{(m \times m)}$, where

$$\Lambda ' = \begin{pmatrix}
 I & \Lambda\\
 0 &  I
\end{pmatrix} $$

by $ \begin{pmatrix}
    F\\
    \QQ
\end{pmatrix}$ is equivalent to apply a left (invertible) transformation $T$ on 
\[ 
\begin{pmatrix}
    F\\
    \QQ
\end{pmatrix}.
\]
Observe that if
 $T =  \begin{pmatrix}
 T_1 & T_2 \\
 T_3 & T_4
\end{pmatrix}$
is an invertible transformation, each block $T_i$, for $i =1, 2, 3, 4$, is an invertible submatrix. The product $$T \cdot \begin{pmatrix}
    F\\
    \QQ
\end{pmatrix}
  = \begin{pmatrix}
    T_1 F + T_2 \QQ \\
    T_3 F + T_4 \QQ
\end{pmatrix}
$$
is still a system made by $n$ OV equations combined with fully quadratic equations and $u$ fully quadratic ones. 
Let
$\begin{cases}
    F' = T_1 F + T_2 \QQ \\
    \QQ' = T_3 F + T_4 \QQ
\end{cases}$,
we have that $\QQ = T_4^{-1} (\QQ' - T_3 F)$, and so $F' = (T_1 - T_2 T_4^{-1} T_3) F + T_2 T_4^{-1} \QQ$, that is, $F'$ is composed by $n$ OV equations since $(T_1 - T_2 T_4^{-1} T_3) F$ is a linear combination of OV equations mixed with $u$ fully quadratic $T_2 T_4^{-1} \QQ$, while the block $\QQ' = T_3 F + T_4 \QQ$ is composed by $u$ fully quadratic equations.
Moreover, we can assume that the fully quadratic equations are generated by an initial seed of the system; this is the reason why we use the matrix $\Lambda '$ instead of the complete transformation $T$. In this way, we reduce memory space because the block of the fully quadratic equations is stored in a compact form.

\subsection{Probabilistic attack: reconstruction of the Oil space}\label{sec:probatt}
We highlight that for some choices of the parameter $u$, one could attack our public key $\PP$ in a probabilistic way. 
Since we operate on the field $\FF_2$, for any homogeneous quadratic polynomial $p({\bf{x}}) \in \FF_2[x_1,\ldots,x_n]$ and for any ${\bf a} \in \FF_2^n$, Equation \eqref{eq:polar} yields 
\[
p'({\bf a},{\bf a})=2p({\bf a})+p({2\bf a})=0.
\]
We consider a fully quadratic system $\QQ$ of $n$ equations in $n$ variables. 
Our aim is to find pairs $({\bf a},{\bf b})\in \FF_2^n$ such that 
\[
\QQ'({\bf a},{\bf b})={\bf 0}.
\]
Such pairs will be fundamental to run the probabilistic attack. One may wonder how many of such pairs exist. Given a subset $A \subset \FF_2^n$ of vectors, for any ${\bf a}\in A$ we solve the linear system of equations
\[
\QQ'({\bf a},{\bf x})={\bf 0}
\]
for ${\bf x} \notin \{{\bf 0},{\bf a}\}$. In particular, the required vector ${\bf b}$ exists whenever the rank of the associated matrix of the above system is strictly less than $n-1$. This is the case for about $40\%$ of the vectors ${\bf a}$ of $A$.
One may observe that increasing the number of equations by $1$, the probability of finding such a pair halves. In particular, given two fully quadratic systems $\QQ_1, \QQ_2$ of $n$ equations each, it is unlikely to find a pair of vectors $({\bf a},{\bf b})$ such that 
\[\QQ_1'({\bf a},{\bf b})={\bf 0} \mbox{ and } \QQ_2'({\bf a},{\bf b})={\bf 0}. \]

We recall that the Oil \& Vinegar map $\MF=F\circ S$ vanishes on all the oil space $\MO$. In particular, for any ${\bf a},{\bf b} \in \MO$, we have 
\[
\MF'({\bf a},{\bf b})={\bf 0}.
\]
We also recall that our public key $\PP$ has the following structure
\[
\PP= \begin{cases}\MF+ \Lambda \QQ \\ \QQ \end{cases},
\]
where $\MF=F \circ S$. We look for the pairs $({\bf a},{\bf b})$ such that 
\[
\PP'({\bf a},{\bf b})={\bf 0}.
\]
Our computation shows that even though $\PP$ is made by $\approx$ $ 2n$ equations, then such pairs exist. In particular, they satisfy 
\[\MF'({\bf a},{\bf b})={\bf 0} \mbox{ and } \QQ'({\bf a},{\bf b})={\bf 0}, \]
that, as discussed above, is unlikely whenever $\MF({\bf a}),\MF({\bf b})\neq {\bf 0}$. Therefore, solving $\PP'({\bf x},{\bf y})={\bf 0}$, we find pairs $({\bf a},{\bf b})$ such that \[
\QQ'({\bf a},{\bf b})={\bf 0} \mbox{ and } \MF({\bf a})=\MF({\bf b})=\MF({\bf a}+{\bf b})={\bf 0},
\]
that is ${\bf a},{\bf b} \in \MO$ with high probability.
However, once we have found one of these pairs $({\bf a},{\bf b})$, there is no direct way to check whether $F$ vanishes on ${\bf a}$ or ${\bf b}$. Hence, we generate enough pairs, $({\bf a}_1,{\bf b}_1),\ldots, ({\bf a}_t,{\bf b}_t)$ with $t\geq \frac{n-v}{2}$ and we check whether
\[
\dim \mathrm{Span}<{\bf a}_1,{\bf b}_1,\ldots,{\bf a}_t,{\bf b}_t> =n-v.
\]
If this is the case, we firstly find a basis ${\bf w}_1,\ldots, {\bf w}_{n-v}$ for the above span, we secondly generate $n+\sigma$ linear combinations of ${\bf w}_1,\ldots, {\bf w}_{n-v}$, say ${\bf v}_1,\ldots, {\bf v}_{n+\sigma}$,  with $\sigma \approx 5$, and, letting 
\[
B= \begin{pmatrix}
    \PP({\bf v}_1) \\
     \PP({\bf v}_2) \\
     \cdots \\
      \PP({\bf v}_{n+\sigma})
\end{pmatrix},\]
we finally check whether
\[
\rank B = n.
\]
We highlight that $B$ is a $ (n+\sigma)\times 2n$ matrix, and for a vector ${\bf v} \in \MO$ one has 
\[
\PP({\bf v})= \begin{cases} \Lambda \QQ({\bf v}) \\ \QQ({\bf v}) \end{cases},
\]
hence, if $\rank B = n$ then ${\bf v}_i \in \MO$ for $i \in 1,\ldots, n+\sigma$ and $\MO = <{\bf w}_1,\ldots, {\bf w}_{n-v}> $.

We resume the attack in the following 
\begin{Algorithm}
  \begin{enumerate}
    
      \item Generate a subset $A \subset \FF_2^n$; 
      \item For any ${\bf a}\in A$ solve the linear system of equations
\[
\PP'({\bf a},{\bf x})={\bf 0}
\]
for ${\bf x} \notin \{{\bf 0},{\bf a}\}$.
\item Let $({\bf a}_1,{\bf b}_1),\ldots, ({\bf a}_t,{\bf b}_t)$ be the pairs found in step (2). If $2t < n-v$, then return to step (1). Otherwise 
check
\[
\dim \mathrm{Span}<{\bf a}_1,{\bf b}_1,\ldots,{\bf a}_t,{\bf b}_t> =n-v.
\]
If this is the case, take a basis ${\bf w}_1,\ldots, {\bf w}_{n-v}$. Otherwise, return to step (1).
\item Generate $n+\sigma$ linear combinations  ${\bf v}_1,\ldots, {\bf v}_{n+\sigma}$ of  the vectors ${\bf w}_1,\ldots, {\bf w}_{n-v}$, with $\sigma \approx 5$, and check 
\[
 \rank \begin{pmatrix}
    \PP({\bf v}_1) \\
     \PP({\bf v}_2) \\
     \cdots \\
      \PP({\bf v}_{n+\sigma}) 
\end{pmatrix} =n. 
\]
If this is the case, output $\MO=<{\bf w}_1,\ldots, {\bf w}_{n-v}>$, otherwise, return to step (1).
  \end{enumerate}  
\end{Algorithm}

\section{Known Attacks}\label{sec:attacks}
In this section, we analyze which are the most known attacks that an attacker Eve can perform on our cryptosystem. In our opinion, the ones that are significant are three and we will denote them  by $E_1, E_2,$  and $E_3$. The first one aims to remove the fully quadratic equations, to obtain an OV-type polynomial and proceed as discussed in Section \ref{sec:Polar}, while the other ones regard our system as a general one. The three attacks assume that Eve knows a ciphertext $\mathbf{b}$ and wants to find the associated plaintext $\mathbf{a}$.

\begin{itemize}

    \item[$(E_1)$]  \textbf{Solving a MinRank instance}: 
    as pointed out in Section \ref{sec:Polar}, the associated matrices to the polar form of an OV-type polynomial with $v$ vinegar variables have rank less than or equal to $2v$. 
    Some linear combinations of one of the $p'_{i}$ for $i \in \{1,\ldots, n\}$ and  $p'_{n+1},\ldots, p'_{m}$ have an associated matrix of rank less than or equal to $2v$, giving rise to a MinRank instance with $K=u+1$, target rank $r=2v$, and matrices of size $n\times n$. Solving this instance, Eve finds the OV-type equation $g_{i}$.
   
    According to Equation \eqref{eq:MRcompl} with $r=2v$, by puncturing the matrices to $n'=4v+2$,  the complexity is
    \[
     O \left( (u+1)^3 (2v+1) \binom{4v+2}{2v}^2 \right)
    \]
    \end{itemize}
     Eve may ignore the properties of the system and solve it by Gr\"obner Bases techniques. 
    \begin{itemize}
   
    \item [$(E2)$] \textbf{XL/Block Wiedemann}:
    Eve can perform the Block Wiedemann XL algorithm, described in \cite{CCNY, Be2}, that is based on the linearization of the system and, after obtained a large but very sparse system of linear equations, solves the system by taking advantage of the sparsity  by using the Block Wiedemann technique. The estimated complexity is 
    \[
     O \left( 3\binom{n}{D_{XL}}^2\binom{n}{2} \right)
    \]
    where $D_{XL}$ is the degree of regularity of 
    \[
\frac{1}{(1-t)}\HS_{\PP}(t)
    \].
      \item[$(E_3)$] \textbf{F4/F5 algorithms}:
      finally, the system can be solved by Gr\"obner basis computation, with the F4/F5 algorithms that retrieve it by applying Gaussian elimination to the Macaulay matrix. Their complexity is 
    \[
     O \left( \binom{n}{d_{\reg}(\PP)}^\omega \right)
    \]
    where $d$ is the degree of regularity of the system (see \cite{YCY}).
\end{itemize}

\section{Proposed Parameters}
In this section, we provide three different sets of parameters for OliVier with the aim of challenging the NIST security levels 1,3 and 5. They require a bit-security of $128$, $192$, $256$ bits respectively that correspond to bit-complexities of  $143$, $205$, $272$ bits. Moreover, Algorithm \ref{alg} complexity is $O(n\cdot 2^v)$, and we can keep the number of vinegar variables relatively small with respect to the size of $n$. First of all, from Section \ref{sec:probatt}, the number $u$ of fully quadratic must be significantly greater than $n$, and for our choices we set $u=2n$, that is the public key $\PP$ has $m=n+u=3n$ equations. We choose $n=320,640,1280$. We use \texttt{MAGMA} to find the values that attain the aforementioned complexity bounds.
By plugging these values of $n$ in the complexities of attacks (E1), (E2), (E3), we obtain that the operating degrees to have the desired complexities are $d=12,14,17$. Hence, for the three choices of $n$, we want to keep the first fall degree of the system $\PP$ equal to $d=12,14,17$. By computing  $\widetilde{\sigma}_i^d$ described in Section \ref{sec:ffdeg}, we find that the corresponding vinegar choices are $v=24,29,36$.

We resume the parameter choices in the Table \ref{tab}, reporting the complexities of the attacks described in section \ref{sec:attacks}. In the table, $v$ is the number of the vinegar variables, MR stands for Min Rank attack, $d_{\mathrm{fall}}$ denotes the first fall degree, XL indicates the XL algorithm and F4 the Faugeré algorithm complexities.

\begin{table}[H]
\begin{tabular}{|c|c|c|c|c|c|c|c|}
\hline
& Target & $n$ &$v$ &MR &$d_{\mathrm{fall}}$ &XL &F4\\  
\hline
SL1&128 (143) &320  &24 &222 &12 &158 &167\\
\hline
SL2&192 (205) &640  &29 &265 &14 &207 &222\\
\hline
SL2&256 (272) &1280  &36 &324 &17 &275 &300\\
\hline
\end{tabular}
\caption{The complexity estimates for the proposed parameters}\label{tab}
\end{table}

\section{Optimization the communication process}\label{sec:opt}
We present several techniques that can be used to speed up the communication process described in Section \ref{sec:comm} and arise from the proposed parameters. 
\begin{enumerate}
    \item The number $t$ can be chosen to increase the probability of finding a guess by using birthday paradox. In particular, Alice chooses $t$ as multiple of $64$ for system requirements. 
    \item To perform the guessing, for any choice of the vinegar variables, Bob precomputes the coefficients $d_{ij}$ in Algorithm \ref{alg}. The amount he needs to precompute is $2^{v-\log_2 t}$. The linear combinations of the $W$ related to the coefficients $d_{ij}$ can be computed by the usage of the Gray code \cite{B} and a technique derived by the  \textit{Method of Four Russians} \cite{ADKF}. The idea is to choose an optimal size $s$ (possibly dividing $n-v$) and compute all the possible $2^s$ binary vectors and enumerating them in an efficient way, namely in any vector by exactly one bit in one position from each of its neighbors.
    The chunks $s$-by-$s$ of $W$ are $\{W_{(h-1)s+1},\ldots, W_{(h-1)s+s}\}$ with $h \in \{1,..,\frac{n-v}{s}\}$.
    For any $\mathbf{e}=(e_1,\ldots, e_s) \in \FF_2^s$ and any $h \in \{1,..,\frac{n-v}{s}\}$ let $C^{(h)}_{\bf{e}}$ be the linear combination  $\sum_{j=1}^s e_jW_{(h-1)s+j}$.
    Therefore, computing $\widetilde{\mathbf{w}}_{k}$ is equivalent to perform $\frac{n-v}{s}$ vector additions of the $C^{(h)}_{\mathbf{e}_h}$ for some $\mathbf{e}_h$ depending on $d_{ij}$. 
    For what concerns the storage, this results in storing $2^s \cdot \frac{n-v}{s}$ linear combinations.
    
   \item To find the $\widetilde{\mathbf{w}}_{k}$ of Algorithm $\ref{alg}$, Bob computes such linear combinations only for few values $\theta\approx 8$ of and verifies whether the $\mathrm{OR}$ of these rows is $\mathbf{1}$. If this is the case, then the guessing is wrong. This happens because, for a wrong guessing there is $\approx \frac{1}{2}$ to obtain $0$ or $1$ in the row $\widetilde{W}_{n-v+1}$, that decreases to $\approx \frac{1}{4}$ for the row $\widetilde{W}_{n-v+2}$ and so on. For $\theta =8$,  a wrong solution verifies the equations  $\widetilde{W}_{n-v+1}, \ldots,  \widetilde{W}_{n-v+\theta}$, with probability $\approx 1/2^8=0.015$. 
   \item From the last item, we get that the number of operations needed to verify each vinegar guess is
   \[
   \frac{n-v}{s}\cdot \theta +\theta,
   \]
   resulting in a maximum of
   \[
   2^{v-\log_2 t} \left(  \frac{n-v}{s}\cdot \theta +\theta \right)
   \]
   operations.  
   
\end{enumerate}

\subsection*{Acknowledgments}
 This paper was made possible through the financial assistance provided by the Project ECS 0000024 “Ecosistema dell’innovazione - Rome Technopole” financed by EU in NextGenerationEU plan through MUR Decree n. 1051 23.06.2022 PNRR Missione 4 Componente 2 Investimento 1.5 - CUP H33C22000420001.


\begin{thebibliography}{}
\bibitem{ADKF}{V. L. Arlazarov, Y. A. Dinitz, and I. A. Kronrod, M. A. Faradzhev}
\textit{On economical construction of the transitive closure of a directed graph}, 
Dokl. Akad. Nauk SSSR, 194 (11), 1970.


\bibitem{B}{G.V. Bard},
\textit{Algebraic Cryptanalysis}, Springer, 2009.

\bibitem{Ba}{M. Bardet}, \textit{\'Etude des syst\'emes alg\'ebriques surd\'etermin\'es. Applications aux
codes correcteurs et \'a la cryptographie}, Ph.D. thesis, Universit\'e Paris 6, 2004.

\bibitem{Ba1}{M. Bardet, M. Bros, D. Cabarcas, P. Gaborit, R. Perlner,
D. Smith-Tone, J. P. Tillich, and J. Verbel}, \textit{Improvements of algebraic attacks for solving the rank decoding and MinRank problems}, International Conference on the Theory and Application of Cryptology and Information Security, 2020.

\bibitem{Be1}{W. Beullens}, \textit{Improved cryptanalysis of UOV and Rainbow}, EUROCRYPT 2021, Part I, volume 12696 of LNCS, Springer, Heidelberg, pp. 348--373, 2021.

\bibitem{Be2}{W. Beullens}, \textit{Breaking rainbow takes a weekend on a laptop}, CRYPTO 2022, Part II, volume 13508 of LNCS, Springer, Heidelberg, pp. 464–479, 2022.
\bibitem{CCNY}{C. Cheng, T. Chou, R. Niederhagen, B. Yang}, \textit{Solving quadratic equations with XL on parallel architecturesg. In Emmanuel Prouff and Patrick Schaumont}, CHES 2012, volume 7428 of LNCS, pp. 356–373. Springer, Heidelberg, September 2012.


\bibitem{CG}{A. Caminata, E. Gorla}, \textit{Solving degree, last fall degree, and related invariants}, Journal of Symbolic Computation Vol. 114, C, pp 322–335, 2023.
\bibitem{CFR}{A. Corbo Esposito, R. Fera, F. Romeo}, \textit{Hilbert series and degrees of regularity of Oil \& Vinegar and Mixed quadratic systems}, https://arxiv.org/abs/2311.18626, 2023.
\bibitem{CKPS}{N. Courtois, A. Klimov, J. Patarin, A. Shamir}, \textit{Efficient Algorithms for Solving Overdefined Systems of Multivariate Polynomial Equations}, In Preneel, Bart (ed.). Advances in Cryptology - EUROCRYPT 2000, International Conference on the Theory and Application of Cryptographic Techniques, Bruges, Belgium, May 14--18, 2000, Proceeding. Lecture Notes in Computer Science. Vol. 1807. Springer, pp. 392--407, 2000.
\bibitem{DG}{V. Dubois, N. Gama} \textit{The Degree of Regularity of HFE Systems}, Abe,
M. (ed.) ASIACRYPT 2010, LNCS, vol. 6477, pp. 557–576, Springer, Heidelberg, 2010.
\bibitem{DS}{J. Ding, D Schmidt}, \textit{Rainbow, a new multivariable polynomial signature scheme}, International Conference on Applied Cryptography and Network Security, Springer, pp. 164--175, 2005.


\bibitem{FJ}{J. C. Faugere, A. Joux}, \textit{Algebraic cryptanalysis of hidden field equation (HFE) cryptosystems using Gr\"obner bases.}, In Proceedings of the Annual International Cryptology Conference. Springer, 44–60, 2003.

 \bibitem{HC}{S. Holmes, L. Chen}, \textit{Assessment of Quantum Threat to Bitcoin and Derived Cryptocurrencies}, Cryptology ePrint Archive, Paper 2021/967, 2021.
 
\bibitem{KPG}{A. Kipnis, J. Patarin, L. Goubin}, \textit{Unbalanced oil and vinegar
signature schemes}, EUROCRYPT’99, volume 1592 of LNCS, Springer, Heidelberg, pp. 206–222, 1999.

 \bibitem{KS1}{A. Kipnis, A. Shamir}, \textit{Cryptanalysis of the oil \& vinegar signature
scheme.}, CRYPTO’98, volume 1462 of LNCS, Springer, Heidelberg, pp. 257–266, 1998.

\bibitem{KS2}{A. Kipnis, A. Shamir}, \textit{Cryptanalysis of the HFE Public Key Cryptosystem by
Relinearization}, Advances in Cryptology – CRYPTO 99, Lecture Notes in Computer
Science, vol. 1666, Springer-Verlag, pp. 19-30, 1999.

\bibitem{MI}{T. Matsumoto, H. Imai}, \textit{Public Quadratic Polynomial-tuples for efficient signature-verification
and message-encryption}, EUROCRYPT'88, Springer Verlag, pp. 419-453, 1988.

\bibitem{Pa1}{J. Patarin}, \textit{Cryptanalysis of the Matsumoto and Imai Public Key Scheme of Eurocrypt'88}, CRYPTO'95, pp. 248-261, 1995.

\bibitem{Pa2}{J. Patarin}, \textit{Hidden Field Equations (HFE) and Isomorphisms of
Polynomials (IP): Two new families of asymmetric algorithms}, Advances
in Cryptology—EUROCRYPT 96 (Ueli Maurer, ed.), Lecture Notes in
Computer Science, vol. 1070, Springer-Verlag, pp. 33–48, 1996.

\bibitem{Pa3}{J. Patarin}, \textit{The oil and vinegar algorithm for signatures}, Dagstuhl Workshop on Cryptography, 1997.

\bibitem{Sh}{P. W. Shor},\textit{Polynomial--time algorithms for prime factorization and
discrete logarithms on a quantum computer}, SIAM J. on Computing, pp. 1484–1509, 1997.
\bibitem{YCY}{J.YC. Yeh, CM. Cheng,  BY. Yang}, \textit{Operating Degrees for XL vs. F4/F5 for Generic MQ with Number of Equations Linear in That of Variables.}, Number Theory and Cryptography. Lecture Notes in Computer Science, vol 8260. Springer, Berlin, Heidelberg, 2013.



\end{thebibliography}
\end{document}